\documentclass[11pt,a4paper]{article}
\usepackage{latexsym}
\usepackage{amssymb}
\usepackage{amsthm}
\usepackage{xcolor}
\usepackage{multicol}
\usepackage[leqno]{amsmath}
\title{The $C^0$ estimate for the quaternionic Calabi conjecture}
\author{Marcin Sroka}
\date{}

\usepackage{geometry}
\geometry{
 a4paper,
 total={170mm,257mm},
 left=25mm,
 right=25mm,
 top=25mm,
 bottom=25mm
 }

\newtheorem*{theorem*}{Theorem}
\newtheorem*{conjecture*}{Conjecture}
\newtheorem{theoremletter}{Theorem}

\newtheorem{lemma}{Lemma}
\newtheorem*{lemma*}{Lemma}
\newtheorem*{sublemma*}{Sublemma}

\newtheorem{definition}{Definition}
\newtheorem*{definition*}{Definition}
\newtheorem{remark}{Remark}
\newcommand{\hh}{\mathbb{H}}
\newcommand{\rr}{\mathbb{R}}
\newcommand{\cc}{\mathbb{C}}

\newcommand{\ii}{\mathfrak{i}}
\newcommand{\jj}{\mathfrak{j}}
\newcommand{\kk}{\mathfrak{k}}

\newcommand{\jpar}{\partial_J}

\begin{document}
\maketitle
\textbf{Abstract:} We prove the $C^0$ estimate for the quaternionic Monge-Amp\`ere equation on compact hyperK\"ahler with torsion manifolds. Our goal is to provide a simpler proof than the one presented in \cite{AS17}.


\textbf{Key words:} quaternionic Monge-Amp\`ere equation; HKT metrics; a priori estimates 

\section{Introduction and preliminaries}

The subject of this note is the quaternionic Monge-Amp\`ere equation on a compact hyperK\"ahler with torsion, later abbreviated as HKT, manifold. 

We start by briefly reminding what are HKT manifolds. Those belong to the realm of quaternionic geometries and emerged from mathematical physics as the internal space of certain super-symmetric sigma models. The established reference for a mathematical treatment is \cite{GP00} which we follow below. Let us recall that a hypercomplex manifold is one, say $M$, equipped with three complex structures $I$, $J$ and $K$ satisfying the quaternionic relation 
\begin{center} $I\circ J \circ K= -id_{TM}$. \end{center} 
A very important note here is that for us endomorphisms act from the right on the tangent space. This convention is compatible with the one taken up by Alesker, Shelukhin and Verbitsky in their papers on the quaternionic Calabi conjecture. In that case each tangent space $T_x M$, for $x \in M$, becomes a right $\hh$ module, or a vector space as is accepted to say, where multiplication by $\ii$, $\jj$ and $\kk$ is given by $I_x$, $J_x$ and $K_x$ respectively. Now, $(M,I,J,K,g)$ is called hyperhermitian if $g$ is a Riemannian metric which is hermitian with respect to $I$, $J$ and $K$ i.e. 
\begin{center} $g=g(\cdot I,\cdot I)=g(\cdot J,\cdot J)=g(\cdot K,\cdot K)$.\end{center} 
A hypercomplex manifold admits the whole sphere of complex structures namely 
\begin{center} $S_M= \{aI+bJ+cK \: | \: a^2+b^2+c^2=1\}$ \end{center} 
and a hyperhermitian metric $g$ is hermitian with respect to all of them. For a given $L \in S_M$ we denote the associated hermitian form by $\omega_L$ i.e. $\omega_L=g(\cdot L, \cdot)$.         
\begin{definition}
A hyperhermitian manifold $(M,I,J,K,g)$ is called HKT if $$\partial \Omega=0$$ where $\Omega := \omega_J - \ii \omega_K$ and $\partial$ in the whole paper is taken with respect to $I$.
\end{definition}
\begin{remark}
The form $\Omega$ is called an HKT form associated to an HKT metric $g$ and it is of type $(2,0)$ with respect to $I$. In \cite{GP00} the definition of an HKT manifold is different. There it is a hyperhermitian manifold for which a linear connection preserving $g$, $I$, $J$, $K$ and having a skew-symmetric torsion tensor exists. This is equivalent to the equality of the three Bismut connections for hermitian manifolds $(M,I,g)$, $(M,J,g)$ and $(M,K,g)$ respectively. These conditions are equivalent to our definition as shown in Proposition 2 of \cite{GP00}. Let us note that, as an easy calculation shows, the condition $d \Omega = 0$ corresponds to $M$ being hyperK\"ahler thus HKT manifolds constitute an intermediate class between hyperhermitian and hyperK\"ahler manifolds.   
\end{remark}

The so called quaternionic Monge-Amp\`ere equation in a compact setting was introduced by Alesker and Verbitsky in \cite{AV10}, it is strongly motivated by its complex analogue. In order to explain it properly we need to elaborate a little more on the geometry of hypercomplex manifolds. First of all, there is a quaternionic analog of the Dolbeault differential operator $\overline{\partial}$ obtained in the following way. Given any field of endomorphisms $L$ on $TM$, acting according to our convention from the right, we define its left action on the space of complex valued, smooth differential forms by 
\begin{center}
$L: \Lambda^k_\cc (M) \ni \alpha \longmapsto \alpha(\cdot L,..., \cdot L) \in \Lambda^k_\cc (M)$.
\end{center}      
The reader sees we use the same symbol, here $\Lambda^k_\cc (M)$, for the vector bundle and the space of its smooth sections. In the case of hypercomplex manifolds we thus obtain the right action of $Sp(1)$ on $TM$ and the left one on $\Lambda^k_\cc (M)$ for any $k$. The twisted Dolbeault differential operator was introduced in \cite{V02} as
\begin{center}
$\jpar:=J^{-1} \circ \overline{\partial} \circ J: \Lambda^k_\cc (M) \rightarrow \Lambda^{k+1}_\cc (M)$
\end{center}
where $\overline{\partial}$ is again everywhere assumed to be taken with respect to $I$. One may check that 
\begin{center}
$\jpar: \Lambda^{p,q}_I (M) \rightarrow \Lambda^{p+1,q}_I (M)$ \\
$\partial \jpar + \jpar \partial = 0 $
\end{center}
where $\Lambda^{p,q}_I (M)$ is the space of differential forms of type $(p,q)$ with respect to $I$. It was observed in \cite{V02} that, from the formal point of view, the pair $\partial$, $\jpar$ is similar to $\partial$, $\overline{\partial}$. This analogy can be pushed further. Since $I$ and $J$ anti-commute the action of $J$, on the forms of a pure type with respect to $I$, is
\begin{center}
$J: \Lambda^{p,q}_I (M) \rightarrow \Lambda^{q,p}_I (M)$
\end{center} and consequently $J$ composed with the bar operator is an involution on $\Lambda^{p,q}_I(M)$ if $p+q$ is even. In \cite{AV06} a subboundle of fixed points for this endomorphism in $\Lambda^{2k,0}_I (M)$ was denoted by $\Lambda^{2k,0}_{I,\rr} (M)$ i.e. $\alpha \in \Lambda^{2k,0}_{I,\rr} (M) $ iff $J \alpha = \overline{\alpha}$ and such a from is called q-real. Furthermore the notion of q-positivity is introduced there as well. For us it will be essential that $\Omega$ is a q-positive form and in general a $(2,0)$ form $\alpha$ is q-positive if $\alpha(X,XJ) \geq 0$, or equivalently $\alpha(Z,\overline{Z}J) \geq 0$, for any $X$ a real vector field and $Z$ a $(1,0)$ vector field. We refer to Section 2 of \cite{AV06} for more details on positivity in order to avoid unnecessary redundancy. On a given HKT manifold of the quaternionic dimension $n$ the bundle $\Lambda^{2n,0}_{I,\rr} (M)$ is trivial as its trivialization is given by $\Omega^n$. Motivated by one of the equivalent formulations of the Calabi conjecture Alesker and Verbitsky posted its version for HKT manifolds, cf. \cite{AV10}. 
\begin{conjecture*} Given any, necessarily q-positive, section of $\Lambda^{2n,0}_{I,\rr} (M)$ i.e. a section of the form $e^F \Omega^n$ for some $F \in C^\infty(M)$ there exists an HKT metric $g$ on $(M,I,J,K)$ such that the associated HKT form is $\Omega + \partial \jpar \phi$ for some $\phi \in C^\infty(M)$ and it satisfies $\left( \Omega + \partial \jpar \phi \right)^n=Ae^F \Omega^n$ for some $A>0$.
\end{conjecture*}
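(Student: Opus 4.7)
The plan is to employ the continuity method for the one-parameter family
\[
\left( \Omega + \partial \jpar \phi_t \right)^n \;=\; A_t \, e^{tF} \, \Omega^n, \qquad t \in [0,1],
\]
where $A_t > 0$ is uniquely determined by the normalization $\sup_M \phi_t = 0$ and $\phi_t$ is constrained to be q-plurisubharmonic with respect to $\Omega$ (i.e.\ $\Omega + \partial \jpar \phi_t$ is q-positive). Setting $T := \{ t \in [0,1] : \text{a smooth solution } \phi_t \text{ exists} \}$, trivially $0 \in T$ with $\phi_0 \equiv 0$ and $A_0 = 1$, so it suffices to prove that $T$ is open and closed in $[0,1]$; then taking $\phi := \phi_1$ and the hyperhermitian metric $g$ whose associated HKT form is $\Omega + \partial \jpar \phi$ solves the conjecture.

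Openness I would derive from the implicit function theorem on Hölder spaces $C^{k+2,\alpha}(M)$. The linearization at $\phi_{t_0}$ of the operator $\phi \mapsto \log\bigl( (\Omega + \partial \jpar \phi)^n / \Omega^n \bigr)$ is, up to a positive factor, the Laplace-type operator associated to the HKT form $\Omega_{t_0} := \Omega + \partial \jpar \phi_{t_0}$. This is a second-order uniformly elliptic operator on q-psh perturbations. Restricting the argument to the slice $\{ \sup_M \phi = 0 \}$ removes the kernel of constants, and the Fredholm alternative then supplies a bounded inverse, producing solutions for all $t$ near $t_0$ and yielding openness of $T$.

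Closedness reduces to a uniform a priori estimate $\|\phi_t\|_{C^{2,\alpha}(M)} \leq C$ for $t \in T$, after which Evans--Krylov arguments exploiting the concavity of $\log \det$ and Schauder bootstrap provide all higher-order bounds and allow extraction of a convergent subsequence as $t \to t_\infty \in \overline{T}$. The $C^{2,\alpha}$ bound is built in three stages: first, the $C^0$ estimate $\|\phi_t\|_{L^\infty} \leq C$, which is the contribution of the present paper and is proved by a Moser-type iteration exploiting integrability properties of q-psh functions; second, a gradient bound obtained by applying the maximum principle to $|\nabla \phi_t|^2 \, e^{-\lambda \phi_t}$ with $\lambda$ sufficiently large to absorb zeroth-order terms; third, an upper bound on $\mathrm{tr}_\Omega \Omega_t$ via the maximum principle applied to $\log \mathrm{tr}_\Omega \Omega_t - \lambda \phi_t$, where the HKT condition $\partial \Omega = 0$ is essential for controlling the torsion terms produced when commuting $\partial$, $\jpar$, and the contraction with $\Omega_t$.

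The principal obstacle is the second-order estimate. Unlike in the Kähler case, the commutators of $\partial$ and $\jpar$ with contraction by $\Omega_t^{-1}$ produce torsion-type contributions whose sign is not a priori favourable; the HKT symmetry $\partial \Omega = 0$ eliminates some of these, but the residual bad terms must be absorbed by a carefully tuned barrier combining the just-established $C^0$ bound with a curvature-type correction from $\Omega$. Openness and the $C^1$ estimate are, by contrast, reasonably direct adaptations of the Kähler arguments once the $C^0$ and $C^2$ bounds are in hand. Hence the simplified $C^0$ estimate supplied by this paper is the cornerstone on which the continuity method rests, removing one of the three quantitative obstructions between the present state of the art and the full resolution of the conjecture.
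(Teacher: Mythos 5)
This statement is labelled a \emph{Conjecture} in the paper for a reason: the paper does not prove it, and neither does your proposal. The paper's sole contribution is the $C^0$ estimate (Theorem \ref{main}); the conjecture itself is known only in special cases (e.g.\ flat hyperK\"ahler manifolds, by Alesker \cite{A13}) and remains open in general. Your text is the standard continuity-method skeleton, and the places where you wave your hands are precisely the places where the problem is open. The decisive gap is the second-order estimate: you write that the residual torsion terms ``must be absorbed by a carefully tuned barrier combining the just-established $C^0$ bound with a curvature-type correction,'' but no such barrier is known to work for a general HKT manifold, and producing one is not a routine adaptation of the K\"ahler or Hermitian computation. The obstruction is structural, not merely computational: a general hypercomplex structure is not locally flat, so the operator cannot be written locally as a Monge--Amp\`ere operator on a domain in $\hh^n$, the quantity $\mathrm{tr}_\Omega \Omega_t$ does not satisfy the differential inequalities one obtains in the integrable case, and the Evans--Krylov step you invoke presupposes a concave function of the real Hessian in suitable coordinates, which is exactly what non-integrability denies you. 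Your gradient estimate suffers from the same defect.

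A secondary issue: even granting all a priori estimates, openness requires an actual analysis of the linearized operator (it is elliptic but not self-adjoint with respect to the natural volume form on a non-hyperK\"ahler HKT manifold, so identifying the cokernel and the role of the normalization constant $A_t$ takes an argument; this is carried out in \cite{AV10}, not supplied by a one-line appeal to the Fredholm alternative). None of this means your outline is the wrong \emph{strategy} --- it is the strategy everyone expects to succeed, and your placement of the present paper's $C^0$ estimate within it is accurate --- but an outline whose central step is an unproved assertion is not a proof, and it should not be presented as one.
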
 
\noindent As was noted in \cite{AV06} the form $\Omega + \partial \jpar \phi$ comes from an HKT metric provided it is q-positive so the above conjecture is equivalent to solvability of the equation
\begin{center}
\[ \tag{1.0}\label{1.0} \begin{cases}
\left( \Omega + \partial \jpar \phi \right)^n=Ae^F \Omega^n \\
\Omega + \partial \jpar \phi \geq 0.
\end{cases} \]
\end{center}
\begin{remark}
Originally the conjecture was posted in \cite{AV10} assuming in addition that the canonical bundle $\Lambda^{2n,0}_I(M)$ of $(M,I)$ is trivial holomorphically. It is always trivial topologically as $\Omega^n$ gives the trivialization but in general this section is not holomorphic. Later, in \cite{AS13, AS17}, it was stated in the form as above.   
\end{remark}
\begin{remark}
The question arises, like in the case of the complex Monge-Amp\`ere equation on hermitian manifolds, why to look for a metric whose associated HKT form is a $\partial \jpar \phi$ perturbation of the original one. This does not follow from a simple requirement of belonging to the de Rham class $[\Omega]_{dR}$ since in general the $\partial \jpar$ lemma is not true on a given HKT manifold. It is true though for example for hyperK\"ahler or, more generally, $Sl_n(\hh)$ manifolds, cf. \cite{GLV17}. Being a $\partial \jpar \phi$ perturbation of $\Omega$ becomes necessary if one agrees to look for solutions belonging to the class of $\Omega$ in a Bott-Chern type cohomology group $$H^{2,0}_{BC}(M) := \frac {\{\eta \in \Lambda^{2,0}_I(M) \: | \: \partial \eta=\jpar \eta = 0\}}{\partial \jpar C^\infty(M)}$$ discussed in \cite{GLV17}.
\end{remark}
\begin{remark}
Provided the canonical bundle $\Lambda^{2n,0}_I(M)$ is trivial holomorphically the necessary condition for solvability of (\ref{1.0}) is $$\int\limits_{M}(1-Ae^F) \Omega^n \wedge \overline{\theta}=0,$$ where $\theta$ gives the holomorphic trivialization. This can be seen from Stokes' theorem. When the canonical bundle is non-trivial any holomorphic section, assuming it is not a zero section, gives rise to the condition as above. We do not know whether there are examples of HKT manifolds for which the space of holomorphic sections is at least two dimensional, certainly there are examples with no sections at all like quaternionic Hopf manifolds. It is not clear for us whether the conditions we obtain in that case are, in general, different or not. This observation was drawn to our attention by S. Dinew.
\end{remark}

Let us now give an overview of the advances towards proving the conjecture. The strategy is, of course, to use the continuity method for which a priori estimates are crucial. It is possible to obtain the $C^0$ estimate in the case when the canonical bundle is trivial by repeating the Moser iteration method used by Yau in \cite{Y78}, this was done by Alesker and Verbitsky in \cite{AV10}. In \cite{AS13} this bound was shown to hold when the hypercomplex structure is locally flat by using the method of B\l ocki from \cite{B05}. We owe a word of explanation for non experts what a locally flat structure means. By definition a complex structure is an integrable $GL_n(\cc)$ structure i.e. any complex manifold locally looks like $\cc^n$. This is not the case for hypercomplex structures which are known to be just 0-integrable $Gl_n(\hh)$ structures and, in general, are not integrable in a strong sense i.e. locally $I$, $J$ and $K$ are not pull backs of the standard hypercomplex structure induced by $\ii$, $\jj$ and $\kk$ in $\hh^n$. When the last condition is true the hypercomplex structure is said to be locally flat and such structures were studied originally in \cite{S75}. 
Under an even stronger assumption that the HKT manifold is a flat hyperK\"ahler one the conjecture was proven by Alesker in \cite{A13}. The assumption that the hyperK\"ahler metric is flat, in the sense that the full Riemann curvature tensor vanishes, implies in particular that the hypercomplex structure is flat. Actually the manifold is then a finite cover of a torus by Bieberbach's theorem on compact, flat Riemannian manifolds.
One of the main difficulties in repeating B\l ocki's argument in the general case is non-integrability of a hypercomplex structure. This prevents the problem from being automatically transferred to the domain in $\hh^n$. That issue was addressed by Alesker and Shelukhin in \cite{AS17}. They provided the proof of the $C^0$ estimate for the general case, i.e. without any additional assumption on an HKT structure, following the scheme of \cite{B05}. It turned out though that the proof of one technical fact needed for the reasoning, Theorem 3.2.2 in \cite{AS17}, is surprisingly complicated and occupies a central part of that paper. We intend to give another, in our opinion simpler, proof of the $C^0$ estimate for the equation (\ref{1.0}) i.e. of the theorem below.
\begin{theoremletter}\label{main}
Let $(M^n,I,J,K,g)$ be a compact HKT manifold and $F \in C^\infty(M)$. There exists a constant $C$, depending only on the HKT structure, $q>2n$ and $\parallel e^{F}\parallel_{L^q}$ (in particular only on $\parallel e^{F} \parallel_{L^\infty}$ and this depends only on $\sup\limits_M F$), such that for any smooth solution $\phi$ of the quaternionic Monge-Amp\`ere equation \begin{center} \[ \tag{1.1}\label{1.1}  \begin{cases}(\Omega + \partial \jpar \phi)^n= e^F\Omega^n \\ \Omega + \partial \jpar \phi \geq 0 \\ \sup\limits_M \phi = 0 \end{cases}  \] \end{center} the bound $\parallel \phi \parallel_{L^\infty} \leq C$ holds.
\end{theoremletter}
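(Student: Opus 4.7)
The plan is to argue along Ko\l odziej--Tosatti--Weinkove lines, proving an integral/sublevel-set estimate for the distribution function of $\phi$; the dependence of $C$ on $\|e^F\|_{L^q}$ for $q>2n$ in the statement makes this the natural framework rather than a pointwise ABP-type reduction. After normalising $\sup_M\phi=0$ (so $\phi\le 0$), I would study $\mu(s):=\int_{\{\phi<-s\}}\Omega^n\wedge\bar\Omega^n$ and aim to show $\mu(s_0)=0$ for an explicit $s_0$ depending only on the allowed data, which is exactly the desired $L^\infty$ bound.

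The key steps would be three. First, an $L^1$ bound $\int_M(-\phi)\,\Omega^n\wedge\bar\Omega^n\le C_0$. The identities $\partial\Omega=0$ (by HKT) and $\jpar\Omega = J^{-1}\bpar(J\Omega) = J^{-1}\bpar\bar\Omega = J^{-1}\overline{\partial\Omega} = 0$ imply that $\phi$ is $\partial\jpar$-subharmonic in a suitable sense, so a Green-function argument on $(M,g)$ gives the bound and Chebyshev then yields $\mu(s)\le C_0/s$. Second, a volume--capacity comparison of the form
\[
\epsilon^n\,\mu(s+\epsilon)\le\int_{\{\phi<-s-\epsilon\}}(\Omega+\partial\jpar\phi)^n\wedge\bar\Omega^n\qquad(0<\epsilon<1),
\]
proved by testing $\phi$ against $-s-\epsilon\rho$ for a bounded quaternionic plurisubharmonic comparison function $\rho$ of oscillation $\le 1$ and invoking Stokes with the HKT identities above. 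Third, H\"older's inequality with exponent $q>2n$ applied to the Monge--Amp\`ere equation bounds the right-hand side by $C\|e^F\|_{L^q}\,\mu(s)^{1-1/q}$. Feeding these into Ko\l odziej's iteration scheme (whose effectiveness precisely requires $q$ above the critical value $2n$) then forces $\mu(s_0)=0$, i.e.\ $\|\phi\|_\infty\le s_0$.

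The technical heart is the comparison step. In the complex Hermitian case this rests on Bedford--Taylor--Ko\l odziej theory, itself built on repeated integration by parts. Here every integral carries an extra $\bar\Omega^n$ factor; while $\bpar\bar\Omega = \overline{\partial\Omega}=0$, the mixed derivative $\partial\bar\Omega$ need not vanish off the hyperK\"ahler locus, and it generates curvature-type error terms at each integration by parts. Absorbing these by Cauchy--Schwarz into the leading $\partial\jpar\phi$ term, using only $C^0$ bounds on the background HKT data, is what sidesteps the delicate quaternionic Hessian comparison of \cite[Theorem 3.2.2]{AS17} --- the point at which \cite{AS17} transfers the problem to $\hn$ --- and constitutes the simplification targeted by the paper.
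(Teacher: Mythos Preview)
Your outline follows Ko\l odziej's pluripotential route (volume--capacity comparison on sublevel sets plus iteration), but this is \emph{not} what the paper does. The paper adapts the Cherrier--Tosatti--Weinkove Moser-iteration scheme from the complex Hermitian case \cite{TW10b}: it first proves a Cherrier-type inequality $\int_M|\partial e^{-p\phi/2}|_g^2\,(\Omega\wedge\bar\Omega)^n\le Cp\|e^{-\phi}\|_{L^{pr}}^p$ (Lemma~\ref{Cherrier}), obtained by integrating $e^{-p\phi}(e^F-1)\Omega^n\wedge\overline{\Omega^n}$ by parts and controlling the torsion contribution $\partial\overline{\Omega^n}=\beta\wedge\overline{\Omega^n}$ via a pointwise Cauchy--Schwarz estimate (Lemma~\ref{pointwise}) followed by an inductive absorption over the mixed powers $\Omega_\phi^k\wedge\Omega^{n-k}$ (Lemma~\ref{induk}). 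Sobolev embedding then feeds this into Moser iteration to bound $\sup_M e^{-\phi}$ by an integral norm (Lemma~\ref{prep1}); a sublevel-set measure estimate from \cite{TW10a} (Lemma~\ref{prep2}) together with the $L^1$ bound of \cite{AS13} closes the argument. No capacity or comparison principle appears anywhere.

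Your Step~2 is where the gap lies. The inequality $\epsilon^n\mu(s+\epsilon)\le\int_{\{\phi<-s-\epsilon\}}\Omega_\phi^n\wedge\overline{\Omega^n}$ on a non-closed background is not a single integration by parts with Cauchy--Schwarz absorption: already in the complex Hermitian case it required a modified capacity and a substantial argument (Dinew--Ko\l odziej, Ko\l odziej--Nguyen), and the quaternionic analogue on HKT manifolds was not available. The ``absorb by Cauchy--Schwarz into the leading $\partial\jpar\phi$ term'' mechanism you invoke is exactly the content of the paper's Lemma~\ref{pointwise}, but there it lives inside the Cherrier inequality for a \emph{single} smooth function $\phi$; transplanting it to a comparison between $\phi$ and an auxiliary bounded $\rho$ over sublevel sets, where boundary terms and the interaction of two quaternionic psh functions enter, is a genuinely different problem that your sketch does not resolve. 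In short, you have correctly identified both the obstruction ($\partial\overline{\Omega^n}\neq 0$) and the tool (Cauchy--Schwarz absorption), but placed them in the wrong framework; the paper deploys them inside a Moser iteration, not inside pluripotential theory.
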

The proof we present is strongly motivated by the reasoning performed in \cite{TW10b} which is a refined version of the one described in \cite{TW10a}. This in turn is based on an inequality obtained originally by Cherrier in \cite{Ch87}. The method emerged in the course of proving the $C^0$ estimate for the complex Monge-Amp\`ere equation on a compact hermitian, implicitly non-k\"ahler, manifold. The general strategy we take is as follows. Firstly we prove the so called Cherrier type inequality, Lemma \ref{Cherrier}, for the assumed solution of (\ref{1.1}). Then using the Moser iteration method we obtain a special bound on $\inf\limits_M \phi$, Lemma \ref{prep1}, but still not being the desired estimate since the right hand side depends on $\phi$. From purely measure theoretic reasons this shows that the values of $\phi$ are separated from $\inf\limits_M \phi$ by a positive constant, independent of $\phi$ as it turns out, on a set of a positive, independent of $\phi$, measure, see Lemma \ref{prep2}. From this one can see the uniform bound follows easily provided we have at least an $L^1$ a priori estimate for which we refer to \cite{AS13} where it was proven via the bounded Green function argument. The $L^1$ bound from \cite{AS13} is also needed in \cite{AS17}, cf. Step 1 of the proof of Theorem 1.1.13, so there is no sweeping the issue under the carpet here.       

\textbf{Acknowledgments:} I would like to thank my supervisor S\l awomir Ko\l odziej for a constant help, reading the manuscript and the time he has spared for me. I have to mention numerous discussions with S\l awomir Dinew on complex and quaternionic Monge-Amp\`ere equations for which I am very grateful. I thank the referee for pointing out that the dependence of $C$ in Theorem \ref{main} can be relaxed from the $L^{\infty}$ to an $L^q$ norm of the RHS. This research was partially supported by the National Science Center of Poland grant number 2017/27/B/ST1/01145. 

\section{The $C^0$ estimate for the equation (\ref{1.1})}

In this note we apply the convention that unless explicitly stated any constant, not written of what it is dependent, is independent of $\phi$. When we want to express on what the constant is dependent we put those quantities in brackets for example $C \left(p,\parallel f \parallel_{L^\infty(M)} \right)$.  The same letter may denote different constants from line to line just to avoid unnecessary indexing. All the $L^q$ norms are taken w.r.t the volume element $\left(\Omega \wedge \overline{\Omega} \right)^n$.  

In Subsection 2.1 we prove the Cherrier type inequality, cf. (22) in \cite{Ch87} or Lemma 2.1 in \cite{TW10b}, which is a cornerstone of the reasoning. The proof of Theorem \ref{main} is finished in Subsection 2.2.

\subsection{A Cherrier type inequality for the quaternionic Monge-Amp\`ere equation}

\begin{lemma}\label{Cherrier} There exist positive constants $C$, $p_0$ both depending on the HKT geometry of the manifold, $q>2n$ and $\parallel e^{F}\parallel_{L^q}$ such that for any solution of (\ref{1.1}), $r$ being H\"older's conjugate of $q$ and any $p \geq p_0$ \begin{center} $\int\limits_{M} |\partial e^{-\frac p 2 \phi }|^{2}_{g} \left(\Omega \wedge \overline{\Omega} \right)^n \leq C p \parallel e^{- \phi} \parallel_{L^{pr}}^p$. \end{center}  \end{lemma}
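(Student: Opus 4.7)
I would follow the Cherrier argument as refined by Tosatti--Weinkove in \cite{TW10b}, adapted to the quaternionic setting. Write $\tilde\Omega := \Omega + \partial\jpar\phi$. Testing the equation $\tilde\Omega^n - \Omega^n = (e^F - 1)\Omega^n$ against $e^{-p\phi} \geq 1$ (since $\sup\phi = 0$), wedging with $\overline{\Omega}^n$, and using the algebraic factorisation
$$\tilde\Omega^n - \Omega^n = \partial\jpar\phi \wedge T, \qquad T := \sum_{k=0}^{n-1} \tilde\Omega^k \wedge \Omega^{n-1-k},$$
yields the starting identity
$$\int_M e^{-p\phi}\, \partial\jpar\phi \wedge T \wedge \overline{\Omega}^n \;=\; \int_M e^{-p\phi}(e^F - 1)\,\Omega^n \wedge \overline{\Omega}^n.$$

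The main step is an integration by parts in the LHS. The HKT hypothesis $\partial\Omega = 0$ is what makes this work: it forces $\partial T = 0$, so applying Stokes to the exact form $\partial[e^{-p\phi}\jpar\phi \wedge T \wedge \overline{\Omega}^n]$ gives, up to sign conventions,
$$p\int_M e^{-p\phi}\,\partial\phi \wedge \jpar\phi \wedge T \wedge \overline{\Omega}^n \;=\; \int_M e^{-p\phi}(e^F-1)\Omega^n\wedge\overline{\Omega}^n \;-\; \int_M e^{-p\phi}\,\jpar\phi \wedge T \wedge \partial\overline{\Omega}^n.$$
By q-positivity of $T$ (with $T \geq \Omega^{n-1}$ in the cone of q-positive $(2n-2,0)$-forms) and the standard local identity $\partial\phi\wedge\jpar\phi\wedge\Omega^{n-1}\wedge\overline{\Omega}^n = c_n|\partial\phi|_g^2(\Omega\wedge\overline{\Omega})^n$ with $c_n>0$, the LHS dominates
$$\frac{4c_n}{p}\int_M|\partial e^{-p\phi/2}|_g^2 \,(\Omega\wedge\overline{\Omega})^n.$$
The first term on the RHS is immediately controlled, by H\"older with exponents $r,q$ and the identity $\|e^{-p\phi}\|_{L^r} = \|e^{-\phi}\|_{L^{pr}}^p$, by $C\|e^F\|_{L^q}\|e^{-\phi}\|_{L^{pr}}^p$, which is exactly the target form.

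The hard part is the error term $E := \int_M e^{-p\phi}\,\jpar\phi \wedge T \wedge \partial\overline{\Omega}^n$, which is the only contribution that survives because $\partial\overline{\Omega} \neq 0$ in general on an HKT manifold (its vanishing would force $\bpar\Omega = 0$ and hence hyperK\"ahlerity). My plan is to Cauchy--Schwarz with a small parameter $\varepsilon > 0$: rewriting $e^{-p\phi}\partial\phi = -\tfrac{2}{p}\,e^{-p\phi/2}\,\partial e^{-p\phi/2}$, one lands on
$$|E|\;\leq\;\frac{\varepsilon}{p}\int_M |\partial e^{-p\phi/2}|_g^2\,(\Omega\wedge\overline{\Omega})^n \;+\; \frac{C_\varepsilon}{p}\int_M e^{-p\phi}\,|T|_g^2\,(\Omega\wedge\overline{\Omega})^n,$$
where the pointwise bound on $|\partial\overline{\Omega}|_g$ from the HKT geometry has been absorbed into $C_\varepsilon$. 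The first summand is absorbed into the good term by taking $\varepsilon$ small. The second is the technical bottleneck, since $|T|_g^2$ involves the a priori unbounded $\tilde\Omega$; here one either iterates the integration-by-parts/Cauchy--Schwarz step to trade pairs of $\tilde\Omega$ for a gradient and for a factor of $e^F$ via the Monge--Amp\`ere equation, or majorises $|T|_g^2(\Omega\wedge\overline{\Omega})^n$ pointwise by a multiple of $(\tilde\Omega^n+\Omega^n)\wedge\overline{\Omega}^n$ using q-positivity and then applies H\"older against $e^F$ one more time. It is precisely in this absorption that one needs $p\geq p_0$, and it mirrors the central calculation of \cite{TW10b}.
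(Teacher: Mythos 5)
Your setup --- testing the equation against $e^{-p\phi}$, the factorisation $\Omega_\phi^n-\Omega^n=\partial\jpar\phi\wedge T$, the integration by parts using $\partial T=0$, the H\"older bound on the $e^F$ term, and bounding the good term from below via $T\geq\Omega^{n-1}$ --- is exactly the paper's derivation of its starting inequality (2.1). The genuine gap is in your treatment of the error term $E$. The Cauchy--Schwarz you propose leaves you with $\int_M e^{-p\phi}|T|_g^2\,(\Omega\wedge\overline{\Omega})^n$, and neither of your two suggested ways of controlling it works. Writing $\phi_1,\dots,\phi_n\geq 0$ for the eigenvalues of $\Omega_\phi$ relative to $\Omega$ at a point (this requires the simultaneous diagonalisation that the paper proves separately and that you would also need to justify), $|T|_g$ is comparable to $\sum_{k\leq n-1}\sigma_k(\phi)$, while the equation only controls $\sigma_n(\phi)=e^F$. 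The pointwise majorisation $|T|_g^2\leq C(1+\sigma_n(\phi))$ is false: with $\phi=(t,\dots,t,e^F t^{-(n-1)})$ one has $\sigma_n=e^F$ fixed but $\sigma_{n-1}(\phi)^2\geq t^{2(n-1)}\to\infty$. The same example shows that even the terms \emph{linear} in the mixed products, $\int_M e^{-p\phi}\,\Omega_\phi^k\wedge\Omega^{n-k}\wedge\overline{\Omega^n}$ for $1\leq k\leq n-1$, are not controlled by the Monge--Amp\`ere equation, so ``applying H\"older against $e^F$ one more time'' cannot close the argument, and there is no mechanism for trading powers of $\Omega_\phi$ for factors of $e^F$.

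What actually closes the argument in the paper is a weighted Cauchy--Schwarz combined with an induction. The pointwise inequality (2.2) keeps the weight $\Omega_\phi^k\wedge\Omega^{n-1-k}$ on \emph{both} sides,
$\bigl|\jpar\phi\wedge\beta\wedge\Omega_\phi^k\wedge\Omega^{n-1-k}\bigr|\leq \frac{B}{\epsilon}\,\partial\phi\wedge\jpar\phi\wedge\Omega_\phi^k\wedge\Omega^{n-1-k}+B\epsilon\,\Omega_\phi^k\wedge\Omega^{n-k}$,
so the bad terms stay linear in the mixed wedge powers. Then one runs an induction on $i$ (the paper's Lemma \ref{induk}): each mixed term is split as $\Omega_\phi^{k}\wedge\Omega^{n-k}=\Omega_\phi^{k-1}\wedge\Omega^{n-k+1}+\partial\jpar\phi\wedge\Omega_\phi^{k-1}\wedge\Omega^{n-k}$, the second piece is integrated by parts again, and the resulting gradient terms are absorbed into the good term at the cost of halving its coefficient (whence the factor $p/2^i$) while shrinking $\epsilon_i$ and enlarging $p_i(\epsilon)$ at each step; after $n$ steps no mixed term survives. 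Your plan is missing both the correct form of the pointwise inequality and this absorption scheme, and as written the error term cannot be bounded.
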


\begin{proof}[Proof of Lemma \ref{Cherrier}] 
Let $\Omega_\phi := \Omega + \partial \jpar \phi$. Using the Stokes theorem we obtain that for any $p>0$ and fixed $q>2n$

\begin{center} \begin{equation} \tag{2.1} \label{2.1} \begin{split} 
& C \left(\parallel e^F \parallel_{L^q} \right) \parallel e^{-\phi} \parallel_{L^{pr}}^p \geq \int\limits_{M} e^{-p\phi} (e^F-1) \Omega^n \wedge \overline{\Omega^n} \\
& = \int\limits_{M}e^{-p\phi}(\Omega_{\phi}^n - \Omega^n) \wedge \overline{\Omega^n} = \int\limits_{M}e^{-p\phi} \partial \jpar \phi \wedge \alpha \wedge \overline{\Omega^n} \\
& = p \int\limits_{M}e^{-p\phi} \partial \phi \wedge \jpar \phi \wedge \alpha \wedge \overline{\Omega^n} + \int\limits_{M}e^{-p\phi} \jpar \phi \wedge \beta \wedge \alpha \wedge \overline{\Omega^n} \end{split} \end{equation} \end{center} 
where $r$ is H\"older's conjugate of $q$, $\alpha= \sum\limits_{k=0}^{n-1} \Omega_{\phi}^k \wedge \Omega^{n-1-k}$ and $\partial \left( \overline{\Omega^n} \right)=\beta \wedge \overline{\Omega^n}$ for some $(1,0)$ form $\beta$. First inequality above is the H\"older inequality and in the integration by parts we used $\partial \alpha = 0$. 

Our goal is to estimate the second factor on the right hand side of (\ref{2.1}) which we reduce to finding a uniform pointwise bound on \begin{center} $\jpar \phi \wedge \beta \wedge \alpha \wedge \overline{\Omega^n}$.\end{center} 
This follows from the analogue of the inequality $(2.2)$ from \cite{TW10b} as in the lemma below.
\begin{lemma} \label{pointwise} There exists a positive constant $B$ depending on $\beta$ such that 
\begin{center} \[ \tag{2.2} \label{2.2} \left| \frac{\jpar \phi \wedge \beta \wedge \Omega_{\phi}^k \wedge \Omega^{n-1-k}}{{\Omega^n}} \right| \leq \frac B \epsilon \frac{ \partial \phi \wedge \jpar \phi \wedge \Omega_{\phi}^k \wedge \Omega^{n-1-k}}{\Omega^n} + B \epsilon \frac{\Omega_{\phi}^k \wedge \Omega^{n-k}}{\Omega^n} \] \end{center} 
for any $\epsilon>0$ and $k\in \{0,...,n-1\}$. \end{lemma}

\begin{proof}[Proof of Lemma \ref{pointwise}] 
Let us note that, like in the complex case, one is able to simultaneously diagonalize, in a certain sense, both $\Omega$ and $\Omega_\phi$. Precisely we claim that for each $x \in M$ there exists a basis of $T_x^{1,0}M$, decomposition with respect to $I$, of the form $e_1,(\overline{e_1})J,...,e_n,(\overline{e_n})J$ such that 
\begin{center} $\Omega(e_i,e_j)=\Omega_\phi(e_i,e_j)=\Omega \left(e_i,(\overline{e_j})J \right)=\Omega_\phi \left(e_i,(\overline{e_j})J \right)=0$ for $i \not = j$.\end{center}
This follows from Lemma \ref{diagon} below by taking $\Omega_1=\Omega$ and $\Omega_2=\Omega_\phi$.

\begin{lemma} \label{diagon}
Let $\Omega_1$ be a strictly positive $(2,0)$ form, i.e. $\Omega_1(z,\overline{z}J)>0$ for any non zero $(1,0)$ vector $z$, and $\Omega_2$ a q-real $(2,0)$ form on $M$. For each $x\in M$ there exists a basis $e_1$, $(\overline{e_1})J$, ...,$e_n$, $(\overline{e_n})J$ of $T^{1,0}_x M$ such that 
\begin{center} \[ \tag{2.3}\label{2.3} \Omega_1(e_i,e_j)=\Omega_2(e_i,e_j)=\Omega_1 \left(e_i,(\overline{e_j})J \right)=\Omega_2 \left(e_i,(\overline{e_j})J \right)=0 \text{ for } i \not = j.\] \end{center}
\end{lemma}
\begin{proof}[Proof of Lemma \ref{diagon}] 
We proceed for a fixed $x \in M$. 

Take an orthonormal basis for $\Omega_1$ i.e. the basis $v_1,(\overline{v_1})J,...,v_n,(\overline{v_n})J$ such that (\ref{2.3}) is satisfied for $\Omega_1$ and in addition $\Omega_1(v_i,\overline{v_i}J)=1$ for $1 \leq i \leq n$. With its aid one is able to check that the endomorphism
\begin{center} $\widetilde{\Omega_2}: T^{1,0}_x M \longrightarrow T^{1,0}_x M $\end{center} 
defined by the relation 
\begin{center} $\Omega_2(v,\cdot)=\Omega_1 \left( \widetilde{\Omega_2}(v), \cdot \right)$ \end{center}
is actually well defined since $\widetilde{\Omega_2}(v) = \sum\limits_{1 \leq i \leq n} \left( \Omega_2(v,\overline{v_i}J) v_i - \Omega_2(v,v_i) \overline{v_i}J \right)$. 
We prove by induction that for any $1 \leq k \leq n$ there exist linearly independent vectors $e_1,(\overline{e_1})J,...,e_k,(\overline{e_k})J$ and complex numbers $\lambda_1,...,\lambda_k$ such that (\ref{2.3}) is satisfied and $\widetilde{\Omega_2}(e_i)=\lambda_i e_i$ for any $1 \leq i \leq k$.

For $k=1$ take any eigenvector $e_1$ for $\widetilde{\Omega_2}$, it is linearly independent of $\overline{e_1}J$ and (\ref{2.3}) is trivially satisfied. 

Assume that the claim holds for a fixed $1 \leq k<n$ and take a set of vectors like in the statement for $k$. Let us note that for any $u,v \in T^{1,0}_x M$, using only the q-reality of $\Omega_1$, $\Omega_2$ and the definition of $\widetilde{\Omega_2}$, we obtain
\begin{center}
$\Omega_2(\overline{v}J,u)= - \Omega_2 \left( \overline{v}J,\left( \overline{\overline{u}J} \right) J \right) = -(J \Omega_2)(\overline{v},\overline{\overline{u}J})=-\overline{\Omega_2}(\overline{v},\overline{\overline{u}J})=- \overline{\Omega_2(v,\overline{u}J)}=- \overline{\Omega_1(\widetilde{\Omega_2}(v),\overline{u}J)} = - \overline{\Omega_1} \left(\overline{\widetilde{\Omega_2}(v)},\overline{\overline{u}J} \right)=- (J \Omega_1) \left(\overline{\widetilde{\Omega_2}(v)},\overline{\overline{u}J} \right)= -\Omega_1 \left(\overline{\widetilde{\Omega_2}(v)}J,\overline{\overline{u}J}J \right) = \Omega_1 \left(\overline{\widetilde{\Omega_2}(v)}J,u \right)$
\end{center}
thus proving that 
\begin{center} $\widetilde{\Omega_2}(\overline{v}J)=\overline{\widetilde{\Omega_2}(v)}J$ for any $v \in T^{1,0}_x M$. \end{center}
Since $\widetilde{\Omega_2}(e_i)=\lambda_i e_i$, by the above, $\widetilde{\Omega_2}(\overline{e_i}J)=\overline{\lambda_i} \left(\overline{e_i}J \right)$ for $1 \leq i \leq k$. Consequently 
\begin{center} $\ker \Omega_1(e_i,\cdot) \subset  \ker \Omega_2(e_i,\cdot)$ and  $\ker \Omega_1(\overline{e_i}J,\cdot) \subset  \ker \Omega_2(\overline{e_i}J,\cdot)$ for $1 \leq i \leq k$.\end{center}
We introduce the following subspaces of $T_x^{1,0} M$ 
\begin{center} $V=span \{ e_1,(\overline{e_1})J,...,e_k,(\overline{e_k})J \}$, \\ 
$V'=\ker \Omega_1(e_1,\cdot) \cap \ker \Omega_1 \left( (\overline{e_1})J,\cdot \right) \cap ... \cap \ker \Omega_1(e_k,\cdot) \cap \ker \Omega_1 \left( (\overline{e_k})J,\cdot \right)$, \\
$V''=\ker \Omega_2(e_1,\cdot) \cap \ker \Omega_2 \left( (\overline{e_1})J,\cdot \right) \cap ... \cap \ker \Omega_2(e_k,\cdot) \cap \ker \Omega_2 \left( (\overline{e_k})J,\cdot \right)$.\end{center}
Note that $T_x^{1,0} M = V \oplus V'$ because $V \cap V' = \emptyset$ and $dim_\mathbb{C} V' \geq 2n-2k$. Let us also observe that $$\widetilde{\Omega_2}_{|V'}: V' \longrightarrow V' $$ since for $v \in V'$, by definition, $\widetilde{\Omega_2}(v)$ is such that $\Omega_2(v,\cdot)=\Omega_1 \left( \widetilde{\Omega_2}(v), \cdot \right)$ and $V' \subset V''$. Take $e_{k+1}$ to be any eigenvector for $\widetilde{\Omega_2}_{|V'}$. Since $e_{k+1} \in V'$ and $\Omega_1$ is q-real also $(\overline{e_{k+1}})J \in V'$. Finally due to the inclusion $V' \subset V''$ the linearly independent vectors $e_1,(\overline{e_1})J,...,e_{k+1},(\overline{e_{k+1}})J $ satisfy (\ref{2.3}) and thus all the required properties of the claim for $k+1$.
\end{proof}
\begin{remark}
A similar statement, Proposition 3.2, is contained in \cite{V10} and justified by saying that it follows from "a standard argument which gives simultaneous digitalization of two pseudo-Hermitian forms". We do not understand why this diagonalization is possible without assuming at least one of $\Omega_1$ or $\Omega_2$ being positive because in general two pseudo-Hermitian forms are diagonalizable simultaneously if at least one of them is positive.
\end{remark}
After normalization of $e_i$'s we may assume that 
\begin{center} 
$\Omega = e_1^* \wedge J^{-1} \left( \overline{e_1^*} \right) + ... + e_n^* \wedge J^{-1} \left( \overline{e_n^*} \right)$ \\ $\Omega_\phi = \phi_1 e_1^* \wedge J^{-1} \left( \overline{e_1^*} \right) + ... + \phi_n e_n^* \wedge J^{-1} \left( \overline{e_n^*} \right)$ for $\phi_i \geq 0$.
\end{center}
Let us decompose 
\begin{center}
$\beta=\sum\limits_{i=1}^n b_{2i-1}e_i^* + b_{2i} J^{-1}(\overline{e_i^*})$, $\partial \phi = \sum\limits_{i=1}^n a_{2i-1}e_i^* + a_{2i} J^{-1}(\overline{e_i^*})$ \\
then $\jpar \phi = J^{-1}( \overline{\partial} \phi) = J^{-1} \left( \overline{\partial \phi } \right)= \sum\limits_{i=1}^n - \overline{a_{2i}} e_i^* + \overline{a_{2i-1}} J^{-1}(\overline{e_i^*})$.
\end{center}
Since $b_i$'s are the coefficients of $\beta$ in a unitary basis they are uniformly bounded by $|\beta|_g$. One easily checks the equalities
\begin{center}
$\Omega_\phi^k \wedge \Omega^{n-k} = \frac{k!(n-k)!}{n!} \sum\limits_{1 \leq i_1 < ... < i_k \leq n} \phi_{i_1}...\phi_{i_k} \Omega^n$,\\
$\partial \phi \wedge \jpar \phi \wedge \Omega_\phi^k \wedge \Omega^{n-(k+1)} = \frac{k!(n-k-1)!}{n!} \sum\limits_{1 \leq i_1 < ... < i_k \leq n} \left( \sum\limits_{j \not \in \{i_1,...,i_k\}} |a_{2j-1}|^2 + |a_{2j}|^2 \right) \phi_{i_1}...\phi_{i_k} \Omega^n$, \\
$ \jpar \phi \wedge \beta \wedge \Omega_\phi^k \wedge \Omega^{n-(k+1)} = \frac{k!(n-k-1)!}{n!} \sum\limits_{1 \leq i_1 < ... < i_k \leq n} \left( \sum\limits_{j \not \in \{i_1,...,i_k\}} -\overline{a_{2j}}b_{2j} -\overline{a_{2j-1}} b_{2j-1} \right) \phi_{i_1}...\phi_{i_k} \Omega^n$.
\end{center}
Thus we see that it is enough to prove that there exists $B$ such that for any $0 \leq k<n$ and $\epsilon >0$
\begin{center}
\[ \begin{split}
& \sum\limits_{1 \leq i_1 < ... < i_k \leq n} \left( \sum\limits_{j \not \in \{i_1,...,i_k\}} |a_{2j}||b_{2j}| +|a_{2j-1}| |b_{2j-1}| \right) \phi_{i_1}...\phi_{i_k} \\
& \leq B \epsilon (n-k) \sum\limits_{1 \leq i_1 < ... < i_k \leq n} \phi_{i_1}...\phi_{i_k} + \frac{B}{\epsilon} \sum\limits_{1 \leq i_1 < ... < i_k \leq n} \left( \sum\limits_{j \not \in \{i_1,...,i_k\}} |a_{2j-1}|^2 + |a_{2j}|^2 \right) \phi_{i_1}...\phi_{i_k}. \end{split}\]
\end{center}
We have the string of inequalities following from the bound on $b_i$'s and the AM--GM inequality
\begin{center}
\[ \begin{split}  
& \sum\limits_{1 \leq i_1 < ... < i_k \leq n} \left( \sum\limits_{j \not \in \{i_1,...,i_k\}} |a_{2j}||b_{2j}| +|a_{2j-1}| |b_{2j-1}| \right) \phi_{i_1}...\phi_{i_k} \\
& \leq |\beta|_g \sum\limits_{1 \leq i_1 < ... < i_k \leq n} \left( \sum\limits_{j \not \in \{i_1,...,i_k\}} |a_{2j}|\phi_{i_1}...\phi_{i_k}  +|a_{2j-1}|\phi_{i_1}...\phi_{i_k}  \right) \\
& \leq \frac{|\beta|_g}{2} \sum\limits_{1 \leq i_1 < ... < i_k \leq n} \left( \sum\limits_{j \not \in \{i_1,...,i_k\}} 2 \epsilon \phi_{i_1}...\phi_{i_k} + \frac{|a_{2j}|^2\phi_{i_1}...\phi_{i_k} +|a_{2j-1}|^2 \phi_{i_1}...\phi_{i_k}}{\epsilon} \right) \end{split}\]
\end{center}
so we get that taking $B=|\beta|_g$ will do. 
\end{proof}
Having Lemma \ref{pointwise} established we are ready to deal with the term involving $\jpar \phi \wedge \beta \wedge \alpha \wedge \overline{\Omega^n}$ in the inequality (\ref{2.1}). 
\begin{lemma} \label{induk} There exist positive constants $C_1,...,C_{n}, \epsilon_1,...,\epsilon_{n}$ depending on the quantities listed in Lemma \ref{Cherrier} such that 
\begin{center} \[ \tag{2.4} \label{2.4} \begin{split}\frac p {2^i} \int\limits_{M}e^{-p\phi} \partial \phi \wedge \jpar \phi \wedge \alpha \wedge \overline{\Omega^n} \leq 
C_i \parallel e^{- \phi} \parallel_{L^{pr}}^p + \epsilon C_i \sum\limits_{k=1}^{n-i} \int\limits_{M}e^{-p\phi} \Omega_{\phi}^k \wedge \Omega^{n-k} \wedge \overline{\Omega^n} \end{split} \]  \end{center}
for all $i \in \{1,...,n\}$, $\epsilon \in (0 , \epsilon_{i} ]$ and $p \geq p_{i}( \epsilon )$ a positive number depending on $\epsilon$ and $i$. \end{lemma}
\begin{proof}[Proof of Lemma \ref{induk}] 
We show the claim by induction for a fixed $q>2n$. 

For the case $i=1$ let us note that from (\ref{2.2}) there exists a uniform positive constant $B$ such that for any $\epsilon > 0$ and $p>0$ 
\begin{center} \[ \begin{split} 
&-\int\limits_{M}e^{-p\phi} \jpar \phi \wedge \beta \wedge \alpha \wedge \overline{\Omega^n} = - \sum\limits_{k=0}^{n-1} \int\limits_{M}e^{-p\phi} \jpar \phi \wedge \beta \wedge \Omega_{\phi}^k \wedge \Omega^{n-1-k} \wedge \overline{\Omega^n} \\
& \leq  \sum\limits_{k=0}^{n-1} \left( \frac B \epsilon \int\limits_{M}e^{-p\phi} \partial \phi \wedge \jpar \phi \wedge \Omega_{\phi}^k \wedge \Omega^{n-1-k} \wedge \overline{\Omega^n} + B \epsilon \int\limits_{M}e^{-p\phi} \Omega_{\phi}^k \wedge \Omega^{n-k} \wedge \overline{\Omega^n} \right). \end{split}\] \end{center} 
We set $\epsilon_1=1$, then, by above, for any $\epsilon \leq \epsilon_1$ and  $p \geq p_1(\epsilon):= \frac {2B} \epsilon$ 
\begin{center} \[ \begin{split} 
& -\int\limits_{M}e^{-p\phi} \jpar \phi \wedge \beta \wedge \alpha \wedge \overline{\Omega^n} \\
& \leq \frac p 2 \int\limits_{M}e^{-p\phi} \partial \phi \wedge \jpar \phi \wedge \alpha \wedge \overline{\Omega^n} +B \int\limits_{M}e^{-p\phi}\Omega^{n} \wedge \overline{\Omega^n} + \epsilon B \sum\limits_{k=1}^{n-1} \int\limits_{M}e^{-p\phi} \Omega_{\phi}^k \wedge \Omega^{n-k} \wedge \overline{\Omega^n}. \end{split} \] \end{center}
This in turn, coupled with the inequality (\ref{2.1}) and H\"older's inequality, gives 
\begin{center} $\frac p 2 \int\limits_{M}e^{-p\phi} \partial \phi \wedge \jpar \phi \wedge \alpha \wedge \overline{\Omega^n} \leq C_1 \parallel e^{- \phi} \parallel_{L^{pr}}^p +  \epsilon C_1 \sum\limits_{k=1}^{n-1} \int\limits_{M}e^{-p\phi} \Omega_{\phi}^k \wedge \Omega^{n-k} \wedge \overline{\Omega^n}$\end{center}
proving the claim for $i=1$. 

For the inductive step suppose the claim holds for some fixed $1 \leq i<n$. To prove (\ref{2.4}) for $i+1$ we note that the LHS of (\ref{2.4}) for $i$ is twice the LHS of (\ref{2.4}) for $i+1$. Consequently it is enough to estimate the RHS of (\ref{2.4}) for $i$ by ones the LHS of (\ref{2.4}) for $i+1$ and the terms appearing on the RHS of (\ref{2.4}) for $i+1$. Note that since $\Omega_\phi = \Omega + \partial \jpar \phi$ we get
\begin{center} \[ \tag{2.5} \label{2.5} \begin{split} 
& \epsilon C_i \sum\limits_{k=1}^{n-i} \int\limits_{M}e^{-p\phi} \Omega_{\phi}^k \wedge \Omega^{n-k} \wedge \overline{\Omega^n} \\
& = \epsilon C_i \sum\limits_{k=1}^{n-i} \int\limits_{M}e^{-p\phi} \Omega_{\phi}^{k-1} \wedge \Omega^{n-(k-1)} \wedge \overline{\Omega^n}  \\ 
& + \epsilon C_i \sum\limits_{k=1}^{n-i} \int\limits_{M}e^{-p\phi} \partial \jpar \phi \wedge \Omega_{\phi}^{k-1} \wedge \Omega^{n-k} \wedge \overline{\Omega^n}, \end{split} \] \end{center}
because of the form of the RHS of (\ref{2.4}) for $i+1$ we only need to estimate the second summand. Applying Stokes' theorem and the fact that $\partial \overline{\Omega^n}= \beta \wedge \overline{\Omega^n}$ gives 
\begin{center} \[ \tag{2.6} \label{2.6} \begin{split} 
& \epsilon C_i \sum\limits_{k=1}^{n-i} \int\limits_{M}e^{-p\phi} \partial \jpar \phi \wedge \Omega_{\phi}^{k-1} \wedge \Omega^{n-k} \wedge \overline{\Omega^n}  \\
& = \epsilon p C_i \sum\limits_{k=1}^{n-i} \int\limits_{M}e^{-p\phi} \partial \phi \wedge \jpar \phi \wedge \Omega_{\phi}^{k-1} \wedge \Omega^{n-k} \wedge \overline{\Omega^n}  \\ 
& + \epsilon C_i \sum\limits_{k=1}^{n-i} \int\limits_{M}e^{-p\phi} \jpar \phi \wedge \beta \wedge \Omega_{\phi}^{k-1} \wedge \Omega^{n-k} \wedge \overline{\Omega^n}. \end{split} \] \end{center}
Below we bound both these summands. Let us set $\epsilon_{i+1}$ to be such that $\epsilon_{i+1} \leq \min\{\frac 1 {C_i 2^{i+2}}, \epsilon_i, 1\}$ then for any $\epsilon \in (0, \epsilon_{i+1}]$ and $p\geq p_i(\epsilon)$ 
\begin{center} \[ \tag{2.7} \label{2.7} \begin{split}  
& \epsilon p C_i \sum\limits_{k=1}^{n-i} \int\limits_{M}e^{-p\phi} \partial \phi \wedge \jpar \phi \wedge \Omega_{\phi}^{k-1} \wedge \Omega^{n-k} \wedge \overline{\Omega^n} \\ 
& \leq  \frac p {2^{i+2}} \sum\limits_{k=1}^{n} \int\limits_{M}e^{-p\phi} \partial \phi \wedge \jpar \phi \wedge \Omega_{\phi}^{k-1} \wedge \Omega^{n-k} \wedge \overline{\Omega^n} \\
& = \frac p {2^{i+2}} \int\limits_{M}e^{-p\phi} \partial \phi \wedge \jpar \phi \wedge \alpha \wedge \overline{\Omega^n}. \end{split} \] \end{center}
For any $\epsilon \in (0, \epsilon_{i+1}]$ we set $p_{i+1}(\epsilon)$ to be such that $p_{i+1}(\epsilon) \geq \max \{ p_i(\epsilon),2^{i+2}BC_i \}$ because then, again using firstly (\ref{2.2}), for $p \geq p_{i+1}(\epsilon)$  
\begin{center} \[ \tag{2.8} \label{2.8} \begin{split}  
& \epsilon C_i \sum\limits_{k=1}^{n-i} \int\limits_{M}e^{-p\phi} \jpar \phi \wedge \beta \wedge \Omega_{\phi}^{k-1} \wedge \Omega^{n-k} \wedge \overline{\Omega^n} \\
& \leq \epsilon C_i \sum\limits_{k=1}^{n-i} \frac B \epsilon \int\limits_{M}e^{-p\phi} \partial \phi \wedge \jpar \phi \wedge \Omega_{\phi}^{k-1} \wedge \Omega^{n-k} \wedge \overline{\Omega^n} \\ 
& + \epsilon C_i \sum\limits_{k=1}^{n-i} B \epsilon \int\limits_{M}e^{-p\phi} \Omega_{\phi}^{k-1} \wedge \Omega^{n-(k-1)} \wedge \overline{\Omega^n} \leq 
\frac{p}{2^{i+2}} \int\limits_{M}e^{-p\phi} \partial \phi \wedge \jpar \phi \wedge \alpha \wedge \overline{\Omega^n} \\ 
& + C_i B \int\limits_{M}e^{-p\phi} \Omega^n \wedge \overline{\Omega^n} + \epsilon C_i B  \sum\limits_{k=1}^{n-(i+1)} \int\limits_{M}e^{-p\phi} \Omega_{\phi}^{k} \wedge \Omega^{n-k} \wedge \overline{\Omega^n}.  \end{split} \] \end{center}
Note that for $\epsilon \in (0, \epsilon_{i+1}]$ and $p \geq p_{i+1}(\epsilon)$, from (\ref{2.4}),
\begin{center} $2 \frac p {2^{i+1}} \int\limits_{M}e^{-p\phi} \partial \phi \wedge \jpar \phi \wedge \alpha \wedge \overline{\Omega^n} \leq 
C_i \parallel e^{- \phi} \parallel_{L^{pr}}^p + \epsilon C_i \sum\limits_{k=1}^{n-i} \int\limits_{M}e^{-p\phi} \Omega_{\phi}^k \wedge \Omega^{n-k} \wedge \overline{\Omega^n}.$\end{center} 
By (\ref{2.5}) the RHS of the above inequality equals to
\begin{center} $C_i \parallel e^{- \phi} \parallel_{L^{pr}}^p + \epsilon C_i \sum\limits_{k=1}^{n-i} \int\limits_{M}e^{-p\phi} \Omega_{\phi}^{k-1} \wedge \Omega^{n-(k-1)} \wedge \overline{\Omega^n}  +  \epsilon C_i \sum\limits_{k=1}^{n-i} \int\limits_{M}e^{-p\phi} \partial \jpar \phi \wedge \Omega_{\phi}^{k-1} \wedge \Omega^{n-k} \wedge \overline{\Omega^n}.$ \end{center}
Then by rewriting the second summand and applying (\ref{2.6}) for the last one the above expression becomes
\begin{center} \[ \begin{split} 
& C_i \parallel e^{- \phi} \parallel_{L^{pr}}^p + \epsilon C_i \int\limits_{M}e^{-p\phi} \Omega^n \wedge \overline{\Omega^n} + \epsilon C_i \sum\limits_{k=1}^{n-(i+1)} \int\limits_{M}e^{-p\phi} \Omega_{\phi}^{k} \wedge \Omega^{n-k} \wedge \overline{\Omega^n} \\ & + \epsilon p C_i \sum\limits_{k=1}^{n-i} \int\limits_{M}e^{-p\phi} \partial \phi \wedge \jpar \phi \wedge \Omega_{\phi}^{k-1} \wedge \Omega^{n-k} \wedge \overline{\Omega^n} \\
& + \epsilon C_i \sum\limits_{k=1}^{n-i} \int\limits_{M}e^{-p\phi} \jpar \phi \wedge \beta \wedge \Omega_{\phi}^{k-1} \wedge \Omega^{n-k} \wedge \overline{\Omega^n}. \end{split} \] \end{center}
Applying (\ref{2.7}) for the last but one summand, (\ref{2.8}) for the last one and H\"older's inequality to bound $\parallel e^{- \phi} \parallel_{L^{p}}^p$ by $\parallel e^{- \phi} \parallel_{L^{pr}}^p$ shows that this quantity is estimated by 
\begin{center}$C_{i+1} \parallel e^{- \phi} \parallel_{L^{pr}}^p + \epsilon C_{i+1} \sum\limits_{k=1}^{n-(i+1)} \int\limits_{M}e^{-p\phi} \Omega_{\phi}^k \wedge \Omega^{n-k} \wedge \overline{\Omega^n} + 2 \frac p {2^{i+2}} \int\limits_{M}e^{-p\phi} \partial \phi \wedge \jpar \phi \wedge \alpha \wedge \overline{\Omega^n}$, \end{center}
for a constant $C_{i+1}$ depending on $B$ and $C_i$. We obtain (\ref{2.4}) for $i+1$ and this finishes the proof of the inductive step.
\end{proof}
The proof of the main result, the Cherrier type inequality, is now finished by taking for a given $q>2n$ in Lemma \ref{induk}, $i=n$, $\epsilon=\epsilon_n$ and $p_0=p_n(\epsilon_n)$ because then for any $p \geq p_0$
\begin{center} \[ \begin{split} 
& \int\limits_{M} |\partial e^{-\frac p 2 \phi }|^{2}_{g} \left(\Omega \wedge \overline{\Omega} \right)^n =
n  \int\limits_{M} \partial e^{-\frac p 2 \phi } \wedge \jpar e^{-\frac p 2 \phi } \wedge \Omega^{n-1} \wedge \overline{\Omega^n} =
\frac{n p^2}{4} \int\limits_{M} e^{-p \phi} \partial \phi  \wedge \jpar  \phi  \wedge \Omega^{n-1} \wedge \overline{\Omega^n} \\
& \leq p C \left( \frac {p}{2^n} \int\limits_{M} e^{-p \phi} \partial \phi  \wedge \jpar  \phi  \wedge \alpha \wedge \overline{\Omega^n} \right) \leq
p C \parallel e^{- \phi} \parallel_{L^{pr}}^p. \end{split} \] \end{center} 
\end{proof}

\subsection{The $C^0$ estimate}
\begin{lemma}\label{prep1} There exist positive constants $C$ and $s_0$, depending on the quantities listed in Lemma \ref{Cherrier}, such that for any solution of (\ref{1.1}) \begin{center} $e^{-s_0 \inf\limits_{M} \phi} \leq e^{C} \int\limits_{M} e^{-s_0 \phi} \left( \Omega \wedge \overline{\Omega} \right)^n$. \end{center} \end{lemma}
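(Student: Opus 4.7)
The plan is to run a standard Moser iteration starting from the Cherrier inequality of Lemma \ref{Cherrier}. The first step is to combine the gradient bound
\[
\int_M |\partial e^{-\frac{p}{2}\phi}|_g^2 \left(\Omega\wedge\overline{\Omega}\right)^n \leq C p\,\|e^{-\phi}\|_{L^{pr}}^p
\]
with the $W^{1,2}$ Sobolev embedding on the compact real $4n$-dimensional Riemannian manifold $(M,g)$. That embedding reads $\|u\|_{L^{2\chi}}^2 \leq C_S(\|u\|_{L^2}^2+\|du\|_{L^2}^2)$ with $\chi := \frac{2n}{2n-1}$; applied to the real-valued $u=e^{-\frac p 2\phi}$, for which $|du|_g^2$ is a fixed multiple of $|\partial u|_g^2$, and after absorbing the $L^2$-term via H\"older's inequality ($\|u\|_{L^2}^2 = \|e^{-\phi}\|_{L^p}^p \leq C\|e^{-\phi}\|_{L^{pr}}^p$, using the finiteness of the volume of $M$), I would obtain the workhorse inequality
\[
\|e^{-\phi}\|_{L^{p\chi}} \leq (C' p)^{1/p}\,\|e^{-\phi}\|_{L^{pr}},\qquad p\geq p_0.
\]

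The crucial compatibility is $\chi > r$: since $r = q/(q-1)$ and $\chi = 2n/(2n-1)$, the inequality $\chi > r$ is equivalent to $q > 2n$, which is precisely the hypothesis. Setting $\chi_1 := \chi/r > 1$, $p_k := p_0 \chi_1^k$ and $N_k := \|e^{-\phi}\|_{L^{p_k r}}$ turns the workhorse inequality into the recursion $N_{k+1} \leq (C'p_k)^{1/p_k} N_k$. Iterating yields
\[
N_k \leq \Bigl(\prod_{j=0}^{k-1}(C'p_j)^{1/p_j}\Bigr) N_0,
\]
and the logarithm of the product is dominated by the convergent series $\sum_{j\geq 0} p_j^{-1}\log(C' p_0 \chi_1^j)$, convergence being due to $\chi_1>1$. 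Since $p_k\to\infty$ and $M$ has finite volume, $N_k \to \|e^{-\phi}\|_{L^\infty} = e^{-\inf_M\phi}$, and passing to the limit gives $e^{-\inf_M \phi} \leq e^{C''}\,\|e^{-\phi}\|_{L^{p_0 r}}$.

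The final step is to set $s_0 := p_0 r$ and raise the previous inequality to the $s_0$-th power, whereupon $\|e^{-\phi}\|_{L^{s_0}}^{s_0} = \int_M e^{-s_0\phi}(\Omega\wedge\overline{\Omega})^n$ and the exponent $s_0 C''$ can be absorbed into the new constant $C$, producing exactly the stated inequality. I do not anticipate a genuine obstacle here: the only non-automatic check is $\chi > r$, which the hypothesis $q > 2n$ is tailored to supply; the remainder is standard Moser book-keeping together with the Sobolev embedding on $(M,g)$ and the pointwise equivalence $|du|_g^2 \asymp |\partial u|_g^2$ valid for real-valued $u$.
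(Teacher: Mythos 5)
Your proposal is correct and follows essentially the same route as the paper: Moser iteration starting from Lemma \ref{Cherrier}, the Sobolev embedding on $(M,g)$ with exponent $\gamma=\frac{2n}{2n-1}$, absorption of the $L^2$ term via H\"older, the observation that $\gamma>r$ is exactly $q>2n$, and finally $s_0:=p_0 r$. The only difference is that you spell out the convergence of the product of constants, which the paper leaves implicit.
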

\begin{proof}[Proof of Lemma \ref{prep1}]
From the Sobolev inequality for $(M,g)$, the fact that $\Omega^n \wedge \overline{\Omega^n}$ is uniformly comparable with the Riemannian volume element, Lemma \ref{Cherrier} and the H\"older inequality we obtain
\begin{center} $\left( \int\limits_{M} e^{-p  \phi \gamma}\Omega^n \wedge \overline{\Omega^n} \right)^{\frac 1 \gamma} \leq C \left( \int\limits_{M} | \nabla e^{-\frac p 2 \phi }|_g^2 \Omega^n \wedge \overline{\Omega^n} + \int\limits_{M} e^{ -p \phi } \Omega^n \wedge \overline{\Omega^n} \right) \leq pC \parallel e^{- \phi} \parallel_{L^{pr}}^p $ \end{center}
for $r<\gamma:=\frac {2n} {2n-1}$ H\"older's conjugate of $q$, a uniform constant $C$ and any $p \geq p_0$. This is equivalent to 
\begin{center} $\parallel e^{- \phi} \parallel_{L^{(p \frac \gamma r)r}}=\parallel e^{-\phi}\parallel_{L^{p\gamma}} \leq (pC)^{\frac 1 p} \parallel e^{-\phi}\parallel_{L^{pr}}$.\end{center}
The iteration of the last inequality for $p_0$, $p_0 \frac \gamma r$, $p_0 \left(\frac \gamma r \right)^2$ and so on gives 
\begin{center} $\sup\limits_{M} e^{-\phi} \leq C \parallel e^{-\phi}\parallel_{L^{p_0r}}$ \end{center}
hence we can take $s_0:=p_0r$ since then
\begin{center} $e^{-s_0 \inf\limits_{M} \phi} \leq C \int\limits_{M} e^{-s_0\phi} \Omega^n \wedge \overline{\Omega^n}$. \end{center}
\end{proof}

\begin{lemma}\label{prep2}{\cite{TW10a}} There exist positive constants $C_1$, $C_2$ such that for any solution of (\ref{1.1}) \begin{center} $\int\limits_{ \{\phi \leq \inf\limits_{M} \phi +C_1 \}} \left( \Omega \wedge \overline{\Omega} \right)^n \geq C_2$. \end{center} \end{lemma}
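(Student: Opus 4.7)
The statement is purely measure-theoretic and is a direct consequence of Lemma \ref{prep1}; no further PDE input is required. Writing $m := \inf_M \phi$ and $dV := (\Omega \wedge \overline{\Omega})^n$ for brevity, the plan is the classical three-line argument of \cite{TW10a}: to split the right-hand side of the bound
$$e^{-s_0 m} \leq e^{C} \int_M e^{-s_0 \phi}\, dV$$
supplied by Lemma \ref{prep1} according to the two sets $\{\phi \leq m + C_1\}$ and $\{\phi > m + C_1\}$, where $C_1 > 0$ is a parameter still to be chosen.

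On the far region $\{\phi > m + C_1\}$ the integrand is pointwise bounded by $e^{-s_0 C_1} e^{-s_0 m}$, so that portion of the integral contributes at most $e^{-s_0 C_1} V_0 e^{-s_0 m}$, where $V_0 := \int_M dV$ denotes the total volume of $M$ (independent of $\phi$). I would then fix $C_1$ large enough, depending only on $s_0$, $C$ and $V_0$, so that $e^{C} e^{-s_0 C_1} V_0 \leq 1/2$; this contribution can then be absorbed into the left-hand side, leaving
$$\tfrac{1}{2} e^{-s_0 m} \leq e^{C} \int_{\{\phi \leq m + C_1\}} e^{-s_0 \phi}\, dV.$$

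On the near region $\{\phi \leq m + C_1\}$ the opposite pointwise bound $e^{-s_0 \phi} \leq e^{-s_0 m}$ holds by definition of $m$; pulling this constant out of the integral and cancelling the common factor $e^{-s_0 m}$ on both sides would then give the claim with $C_2 := (2 e^{C})^{-1}$. There is essentially no obstacle in this lemma: the only thing to verify is that $C_1$ and $C_2$ can be chosen independently of $\phi$, and this is inherited immediately from the analogous independence of $s_0$, $C$ and $V_0$ in Lemma \ref{prep1}.
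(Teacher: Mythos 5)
Your argument is correct and is precisely the standard splitting argument from \cite{TW10a} that the paper itself invokes (the paper's proof of this lemma is just the citation ``the proof is exactly as in \cite{TW10a}''). The choice of $C_1$ with $e^{C}e^{-s_0C_1}V_0\leq \tfrac12$ and the resulting $C_2=(2e^{C})^{-1}$ are exactly what that reference yields, so there is nothing to add.
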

\begin{proof}[Proof of Lemma \ref{prep2}]
Having Lemma \ref{prep1}, the proof is exactly as in \cite{TW10a}. The normalization of the volume element they use is purely for computational convenience.
\end{proof}

\begin{proof}[Proof of Theorem \ref{main}]
As it has been said in order to finish the proof one needs at least an $L^1$ bound on $\phi$. This estimate was shown in Proposition 2.3 in \cite{AS13}. The proof is now finished by noting that either
\begin{center} $\inf\limits_{M} \phi +C_1 \geq 0$ giving $- \inf\limits_{M} \phi \leq C_1$ \end{center}
or
\begin{center}  
$C_3 \geq \: \parallel \phi \parallel_{L^1} \geq \int\limits_{ \{\phi \leq \inf\limits_{M} \phi +C_1 \}} |\phi| \left( \Omega \wedge \overline{\Omega} \right)^n \geq C_2 \left( -\inf\limits_{M} \phi -C_1 \right)$.
\end{center}
This gives a uniform constant $C=\max \{C_1, \frac{C_3}{C_2}+C_1 \}=\frac{C_3}{C_2} + C_1$ for which
\begin{center}$-\inf\limits_{M} \phi \leq C$.
\end{center}

\end{proof}

\noindent FACULTY OF MATHEMATICS AND COMPUTER SCIENCE\\
OF JAGIELLONIAN UNIVERSITY \\
\L OJASIEWICZA  6 \\
30-348, KRAK\'OW \\
POLAND \\
\textit{E-mail address:} Marcin.sroka@im.uj.edu.pl

\end{document}